\theoremstyle{plain}
\newcommand{\tens}[1][]{\mathbin{\otimes_{\raise1.5ex\hbox to-.1em{}{#1}}}}
\newcommand{\lltens}[1][]{{\mathop{\tens}\limits^{\rm \mathbb{L}}}_{#1}}
\newcommand{\Q }{\ensuremath{\mathbb Q}}
\newcommand{\Z }{\ensuremath{\mathbb Z}}
\newcommand{\oo }{\ensuremath{\mathcal{O}}}
\newcommand{\hh }{\ensuremath{\mathcal{H}}}
\newcommand{\ff }{\ensuremath{\mathcal{F}}}
\newcommand{\kk }{\ensuremath{\mathcal{K}}}
\newcommand{\jj }{\ensuremath{\mathcal{J}}}
\newcommand{\eee }{\ensuremath{\mathcal{E}}}
\newtheorem{theorem}{Theorem}[section]
\newtheorem{lemma}[theorem]{Lemma}
\newtheorem{proposition}[theorem]{Proposition}
\theoremstyle{definition}}
\theoremstyle{definition}}
\theoremstyle{definition}}
\theoremstyle{definition}}
\theoremstyle{definition}\newtheorem{definition}[theorem]{Definition}}
\theoremstyle{definition}}
\theoremstyle{definition}}
\newcommand{\T}[1]{\textrm{#1}}
\newcommand{\M}[1]{\mathrm{#1}}
\newcommand{\ddet}{\ensuremath{\mathrm{det} \,}}
\def\apl#1#2#3{#1\mkern -1 mu:\mkern - 6 mu
\xymatrix@C=17pt{#2\!\ar[r]&\!#3}
}
\def\aplexp#1#2#3#4{#1\mkern -1 mu:\mkern - 6 mu
\xymatrix@C=17pt{#2\!\ar[r]^-{#4}&\!#3}
}
\def\aplcourte#1#2#3{#1\mkern -4 mu:\mkern - 8 mu
\xymatrix@C=12pt{#2\!\ar[r]&\!#3}
}
\def\aplpt#1#2#3#4{#1\mkern -4 mu:\mkern - 8 mu
\xymatrix@C=17pt{#2\!\ar[r]&\!#3#4}
}
\author{Julien Grivaux}
\address{CNRS, LATP\\
UMR 6632\\
CMI, Universit\'{e} de Provence\\
39, rue Fr\'{e}d\'{e}ric Joliot-Curie\\
13453 Marseille Cedex 13\\
France.}
\email{jgrivaux@cmi.univ-mrs.fr}
\title{Variation of the holomorphic determinant bundle }
\begin{document}

\begin{abstract}
In this paper, we prove that the Grothendieck-Riemann-Roch formula in Deligne cohomology computing the determinant of the cohomology of a holomorphic vector bundle on the fibers of a proper submersion between abstract complex manifolds is invariant by deformation of the bundle.
\end{abstract}
\vspace*{1.cm}
\maketitle

\begin{section}{Introduction}
The Grothendieck-Riemann-Roch theorem is one of the cornerstones of modern algebraic geo\-metry, and can be stated in its initial form as follows:
\begin{theorem} \cite{BS} For any smooth quasi-projective variety $X$ over a field of characteristic zero, the morphism $\ff \rightarrow \M{ch}(\ff) \, \M{Td}(X)$ from the Grothendieck group $\M{K}(X)$ of coherent sheaves on $X$ to the Chow ring $\M{CH}(X)$ of $X$ commutes with proper push-forward.
\end{theorem}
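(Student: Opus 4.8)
The plan is to reduce the general statement to two elementary cases, using that the push-forward maps on $\M{K}$-theory and on the Chow ring are both functorial, so that the desired compatibility is stable under composition of morphisms. Since every projective morphism $f \colon X \to Y$ factors as a closed immersion $i \colon X \hookrightarrow \P^n \times Y$ followed by the projection $p \colon \P^n \times Y \to Y$, and the formula for $f$ follows formally from the formulae for $i$ and $p$, it suffices to treat the projection and the closed immersion separately.

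For the projection $p$, I would invoke the projective bundle formula: $\M{K}(\P^n \times Y)$ is generated as a $\M{K}(Y)$-algebra by the class of $\oo(1)$, and $\M{CH}(\P^n \times Y)$ is a free $\M{CH}(Y)$-module on the powers of the hyperplane class. By the projection formula and the multiplicativity of the Chern character, the verification then collapses to a single identity on $\P^n$ itself, obtained by comparing $\sum_q (-1)^q \dim \M{H}^q(\P^n, \oo(k))$ with the integral of $\M{ch}(\oo(k)) \, \M{Td}(\P^n)$ along the fibre; this is a direct computation starting from the Euler exact sequence of the tangent bundle.

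The closed immersion is the heart of the proof. Here I would use deformation to the normal cone to degenerate $i \colon X \hookrightarrow P$, with $P = \P^n \times Y$, into the zero-section embedding of $X$ into (the projective completion of) its normal bundle $\nn$. A specialization argument reduces the Grothendieck-Riemann-Roch formula for $i$ to the case of such a zero section. For the latter the Koszul resolution gives the $\M{K}$-theory class $i_! \oo_X = \sum_k (-1)^k \bwe^k \nn^{\vee}$, and the essential point becomes the algebraic identity $\M{ch}(i_! \ff) = i_* \bigl( \M{ch}(\ff) \, \M{Td}(\nn)^{-1} \bigr)$, the Riemann-Roch formula for an immersion. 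Combined with the exact sequence $0 \to TX \to i^* TP \to \nn \to 0$, which yields $\M{Td}(TX) = i^*\M{Td}(TP) \cdot \M{Td}(\nn)^{-1}$, this establishes the formula for $i$.

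I expect this last case to be the main obstacle: both the construction and the analysis of the deformation to the normal cone, and the verification that the Chern character of the Koszul complex equals $c_{\M{top}}(\nn) \, \M{Td}(\nn)^{-1}$, are where the real work lies. The characteristic-zero hypothesis enters precisely here, allowing us to tensor with $\Q$ and to apply the splitting principle to reduce the Todd and Chern-character identities to the case of line bundles.
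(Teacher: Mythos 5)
The paper does not prove this statement at all: it is Theorem 1.1 of the introduction, quoted from Borel--Serre \cite{BS} purely as historical background for the analytic results that follow, so there is no internal proof to compare yours against. Judged against the classical proof that the citation refers to, your outline is essentially the standard one: Grothendieck's strategy of factoring the morphism as a closed immersion into $\P^n \times Y$ followed by the projection, with the projection handled by the projective bundle formula in $\M{K}$-theory and in $\M{CH}$ plus the Euler-characteristic computation on $\P^n$, and with the immersion carrying the real weight. Where you genuinely diverge from \cite{BS} is the immersion case: Borel--Serre reduce it by blow-up techniques, whereas you propose deformation to the normal cone together with the Koszul-complex identity $\M{ch}\bigl(\sum_k (-1)^k \bwe^k \nn^{\vee}\bigr) = c_{\M{top}}(\nn)\,\M{Td}(\nn)^{-1}$; this is the later Baum--Fulton--MacPherson route, it is sound, and it is arguably cleaner. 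Two points need repair, though. First, the theorem is stated for \emph{proper} push-forward while your factorization applies to \emph{projective} morphisms; you must add the (easy, standard) observation that a proper morphism between quasi-projective varieties is automatically projective, because the immersion $X \rightarrow \P^n \times Y$ induced by an embedding of $X$ and by $f$ is proper over $Y$, hence a closed immersion. Second, your closing claim that the characteristic-zero hypothesis is what permits tensoring with $\Q$ and the splitting principle is a misconception: the denominators in $\M{Td}$ and $\M{ch}$ force rational coefficients over any field, the splitting principle is characteristic-free, and Grothendieck-Riemann-Roch for projective morphisms holds in all characteristics; the hypothesis in the statement plays no role in the argument you sketch.
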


Since Serre's fundamental papers on coherent sheaves \cite{S1} \cite{S2}, it has become natural and useful to translate results from algebraic to analytic geometry. Concerning the Grothendieck-Riemann-Roch theorem, this has been the object of many researches from early sixties till eighties, starting with the case of analytic immersions \cite{AH} and pursuing with the index theorem for vector bundles and coherent analytic sheaves (see \cite{AS}, \cite{P}, \cite{TT1}, \cite{TT2}). The outcome of these works is O'Brian-Toledo-Tong's proof of the Grothendieck-Riemann-Roch theorem in Hodge cohomology for arbitrary proper holomorphic maps between complex manifolds \cite{OTT}. By completely different methods, Levy \cite{L} succeeded in proving the analogous statement in De Rham cohomology, where the Chern classes are constructed by means of locally-free resolutions in the category of real-analytic coherent sheaves.
\par \medskip
From the middle of the eighties, some new ideas about the Grothendieck-Riemann-Roch theorem emerged after the seminal article of Quillen \cite{Q} introducing canonical hermitian metrics on determinant bundles associated with the cohomology of a  vector bundle on the fibers of a holomorphic submersion (see \cite[Chap.\! VI]{S}). Building on initial results in the case of families of curves (see \cite{Q}, \cite{BF}, \cite{D}), Bismut, Gillet and Soul\'{e} \cite{BGS} proved that, for locally K\"ahler fibrations, the curvature of this determinant bundle is exactly given by the component of degree two of the Grothendieck-Riemann-Roch theorem at the level of differential forms. This theorem has been extended to all degrees in \cite{BK} provided that the higher direct images of the bundle are locally free. Quite recently, Bismut succeeded in removing the K\"{a}hlerianity assumption on the morphism and obtained the following result:
\begin{theorem} \cite{B} \label{Bismut}
For any proper holomorphic submersion $f \colon X \rightarrow Y$ between complex mani\-folds and for any holomorphic vector bundle $\eee$ on $X$ such that the sheaves $\M{R}^i \, f_{*} \,\eee$ are locally free on $Y$, the Grothendieck-Riemann-Roch equality for the couple $(\eee, f)$ holds in the Bott-Chern cohomology ring of $Y$.
\end{theorem}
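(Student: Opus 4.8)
The statement is an equality in the Bott-Chern cohomology ring, so I would first fix auxiliary Hermitian data and translate both sides into explicit closed differential forms whose classes are metric-independent. Choosing a Hermitian metric on $\eee$ and one on the relative tangent bundle $T_{X/Y}$, Chern-Weil theory produces from the Chern connections forms representing $\M{ch}_{BC}(\eee)$ and $\M{Td}_{BC}(T_{X/Y})$; the defining feature of Bott-Chern classes is that changing a metric alters such a representative only by a $\partial\bar\partial$-exact term. The content of the theorem is then the identity of Bott-Chern classes $\M{ch}_{BC}\!\left(\bigoplus_i (-1)^i \M{R}^i f_* \eee\right) = f_*\!\left(\M{ch}_{BC}(\eee)\,\M{Td}_{BC}(T_{X/Y})\right)$, where $f_*$ is fiberwise integration, which descends to Bott-Chern cohomology because $f$ is a proper submersion. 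The local-freeness hypothesis is what guarantees that the alternating sum on the left is the Chern character of a genuine graded holomorphic vector bundle on $Y$.

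My plan is to interpolate between the two sides by a one-parameter family of Chern character forms coming from the Bismut superconnection. I would regard the fiberwise $\eee$-valued Dolbeault complex as an infinite-dimensional $\Z$-graded bundle over $Y$ endowed with the $L^2$-metric, and form the rescaled Bismut superconnection $\mathbb{B}_t$. The supertrace $\M{ch}(\mathbb{B}_t) = \M{Tr}_s\!\left[\exp(-\mathbb{B}_t^2)\right]$ is a smooth closed even form on $Y$ whose De Rham class is independent of $t$, with an explicit transgression $\frac{\partial}{\partial t}\M{ch}(\mathbb{B}_t) = d\,\alpha_t$. The two endpoints are the heart of the matter: as $t \to 0$ the local families index theorem identifies the limit of $\M{ch}(\mathbb{B}_t)$ with the Chern-Weil representative $f_*\!\left(\M{ch}(\eee)\,\M{Td}(T_{X/Y})\right)$, while as $t \to +\infty$, the local freeness ruling out any jump in the fiberwise cohomology, $\M{ch}(\mathbb{B}_t)$ converges to the Chern-Weil form of $\bigoplus_i(-1)^i \M{R}^i f_* \eee$ for the $L^2$-metric. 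Integrating the transgression shows the difference of the two representatives to be $d$-exact.

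The hard part is precisely the upgrade from $d$-exactness to $\partial\bar\partial$-exactness, which is what the Bott-Chern refinement demands and which is where the K\"ahler hypothesis entered in the earlier results recalled above. In the fibered (locally) K\"ahler situation one strengthens the transgression by a second integration against the Bismut--K\"ohler analytic torsion forms, whose $\partial\bar\partial$-derivative recovers the secondary class, so that the difference of Chern-Weil forms is $\partial\bar\partial$-exact rather than merely $d$-exact. Without K\"ahlerianity these torsion forms cease to be of pure bidegree and the double transgression collapses. The device I would use to repair this is Bismut's hypoelliptic deformation: replace the elliptic fiberwise Hodge Laplacian by the family of hypoelliptic Laplacians on the total space of the relative cotangent bundle, interpolating between the Dolbeault Laplacian and a dynamical operator attached to the fiberwise geodesic flow. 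Along this deformation both endpoint asymptotics can be controlled without any K\"ahler metric, and the associated secondary classes are shown to be $\partial\bar\partial$-exact, which yields the equality in $H^*_{BC}(Y)$.

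What remains is to make the interpolation rigorous, and this is where I expect the real technical weight to lie: the endpoint behaviour of $\M{ch}(\mathbb{B}_t)$ and of the hypoelliptic Chern character forms rests on uniform heat-kernel estimates, and the large-$t$ limit depends essentially on the hypothesis that the $\M{R}^i f_* \eee$ are locally free, so that no spectral degeneration spoils the smoothness of the $L^2$-metric on the cohomology bundle. Once the two endpoints are matched and the interpolating secondary class is shown to be $\partial\bar\partial$-exact, combining these gives the Grothendieck-Riemann-Roch identity in the Bott-Chern cohomology ring of $Y$, which is the assertion of the theorem.
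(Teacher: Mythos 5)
This theorem is not proved in the paper at all: it is Bismut's result, quoted from \cite{B} purely as background and motivation for the main theorem, so there is no internal proof against which to measure your attempt. Judged on its own terms, your outline is in fact a faithful reconstruction of the strategy of Bismut's actual proof: the superconnection and transgression formalism, the identification of the $t \to 0$ limit of $\M{ch}(\mathbb{B}_t)$ with the Chern--Weil representative of $f_*\bigl(\M{ch}(\eee)\,\M{Td}(\M{T}_{X/Y})\bigr)$ by the local families index theorem, the role of the local freeness of the sheaves $\M{R}^i f_* \, \eee$ in controlling the $t \to +\infty$ limit, the correct diagnosis that this machinery only produces $d$-exactness whereas Bott--Chern cohomology demands $\partial\bar\partial$-exactness (which is where K\"ahlerianity entered all earlier results), and finally the hypoelliptic Laplacian as the device that removes the K\"ahler hypothesis.

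Nevertheless, as a proof your text is a program rather than an argument, and the gap is concentrated exactly where you say you ``expect the real technical weight to lie.'' Every step that carries that weight is asserted rather than established: that the hypoelliptic deformation has controlled endpoint asymptotics without any K\"ahler metric, that its associated secondary classes live in the right bidegree and are $\partial\bar\partial$-exact, and that the requisite uniform heat-kernel estimates hold, are precisely the content of Bismut's monograph --- several hundred pages of analysis of hypoelliptic operators and exotic superconnection forms. Invoking these claims wholesale is logically equivalent to citing the theorem itself, which is exactly what the present paper does by writing \cite{B}. So your proposal should be read as a correct road map to the literature, not as a self-contained proof; within this paper the statement functions as an external input, and nothing in the paper's own arguments (which concern Deligne cohomology, determinant bundles, and the deformation invariance of the degree-two Grothendieck--Riemann--Roch class) depends on reproving it.
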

\par \medskip
On abstract complex manifolds, the finest known cohomology theory where Chern classes exist for holomorphic vector bundles is Deligne-Be\u\i linson cohomology. The ultimate goal of our program would be to prove the Grothendieck-Riemann-Roch theorem in this cohomology. The statement does not immediately make sense even for holomorphic vector bundles, because Chern classes must be defined for the direct images sheaves $\T{R}^i f_* \, \eee$. The problem of defining Chern classes of coherent sheaves in Deligne cohomology is solved on compact manifolds in \cite{G1}. The corresponding Grothendieck-Riemann-Roch theorem is proved only for projective morphisms between complex compact manifolds.
\par \medskip
In this paper, we focus only on  the component of degree two on the base of the Grothendieck-Riemann-Roch theorem in Deligne cohomology for holomorphic vector bundles. Our main result describes completely the variation of the determinant bundle:
\begin{theorem} \label{main}
Let $f \colon X \rightarrow Y$ be a proper holomorphic submersion between complex manifolds $X$, $Y$ and let $(\eee_t)_{t \in \Delta}$ be a holomorphic family of holomorphic vector bundles on $X$ parameterized by the complex unit disc $\Delta$. Then there exists a unique analytic curve $\alpha$ from $\Delta$ to $\M{Pic}^0(Y)$ vanishing at $0$ such that for any $t$ in $\Delta$,
\[
\M{c}_1^{\M{D}}(\alpha(t))= [f_*\{ [\M{ch}^{\M{D}}(\eee_t)-\M{ch}^{\M{D}}(\eee_0)] \, \M{td}^{\M{D}}(\M{T}_{X/Y})\}]^{(2)}
\]
 in the rational Deligne cohomology group $\M{H}^2_{\M{D}} (Y, \Q(1))$. Besides, for any $s$ and $t$ in $\Delta$,
\[
\M{c}_1^{\M{D}} [\ddet \M{R} f_* \, \eee_s]-\M{c}_1^{\M{D}} [\ddet \M{R} f_* \, \eee_t]=\alpha(s)-\alpha(t)
\] in $\M{Pic}(Y)$.
In particular, the class $\M{c}_1^{\M{D}} [\ddet \M{R} f_* \, \eee_t] - [f_*\{ \M{ch}^{\M{D}}(\eee_t) \, \M{td}^{\M{D}}(\M{T}_{X/Y})\}]^{(2)}$ in $\M{H}^2_{\M{D}} (Y, \Q(1))$ is independent of $t$.
\end{theorem}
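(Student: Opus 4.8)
The plan is to take property (b) as the \emph{definition} of $\alpha$, so that the concluding ``in particular'' assertion becomes a formal consequence and all the work concentrates on the Riemann--Roch identity (a), whose real content is the transcendental (Picard) part of the variation. Concretely I would set
\[
\alpha(t):=\M{c}_1^{\M{D}}[\ddet \M{R}f_*\,\eee_t]-\M{c}_1^{\M{D}}[\ddet \M{R}f_*\,\eee_0]\in\M{Pic}(Y),
\]
so that (b) and $\alpha(0)=0$ hold tautologically, and $\M{c}_1^{\M{D}}[\ddet \M{R}f_*\,\eee_t]-[f_*\{\M{ch}^{\M{D}}(\eee_t)\,\M{td}^{\M{D}}(\M{T}_{X/Y})\}]^{(2)}$ is seen to be independent of $t$ by subtracting this definition from (a). Three easy points then remain. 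The family $(\eee_t)$ glues to a holomorphic bundle $\ti{\eee}$ on $X\times\Delta$; pushing it forward along $\ti{f}\colon X\times\Delta\to Y\times\Delta$ and forming the Knudsen--Mumford determinant produces a line bundle on $Y\times\Delta$ restricting to $\ddet\M{R}f_*\,\eee_t$ on each slice, whence $t\mapsto[\ddet\M{R}f_*\,\eee_t]$ is analytic and so is $\alpha$. Since $Y\times\Delta$ retracts onto $Y\times\{0\}$, the integral first Chern class of this line bundle is constant along the slices, so the image of $\alpha(t)$ in $\M{H}^2(Y,\Z)$ vanishes and $\alpha$ lands in $\M{Pic}^0(Y)$. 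Uniqueness is formal, the homomorphism $\M{Pic}^0(Y)\to\M{H}^2_{\M{D}}(Y,\Q(1))$ induced by $\M{c}_1^{\M{D}}$ killing only torsion classes, and an analytic curve of torsion classes through $0$ being constant.

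There remains the identity (a). I would start from the exact sequence
\[
0\longrightarrow\M{Pic}^0(Y)\longrightarrow\M{H}^2_{\M{D}}(Y,\Z(1))\longrightarrow\M{Hdg}^1(Y)\longrightarrow0,
\]
which presents $\M{H}^2_{\M{D}}(Y,\Z(1))=\M{Pic}(Y)$ with subgroup the intermediate Jacobian $\M{Pic}^0(Y)=\M{H}^1(Y,\oo_Y)/\M{H}^1(Y,\Z(1))$ and quotient the integral $(1,1)$-classes. Under the Betti realisation the right-hand side of (a) maps to $[f_*^{\M{top}}(\M{ch}(\eee_t)-\M{ch}(\eee_0))\,\M{Td}(\M{T}_{X/Y})]^{(2)}$; as $\eee_t$ and $\eee_0$ are differentiably isomorphic their topological Chern characters agree, so this realisation is zero and the right-hand side of (a) already lies in the Jacobian part. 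Thus (a) reduces to an equality of two analytic curves $\Delta\to\M{Pic}^0(Y)$ vanishing at $0$; lifting both through the covering $\M{H}^1(Y,\oo_Y)\to\M{Pic}^0(Y)$, it suffices to match their derivatives in $\M{H}^1(Y,\oo_Y)=\M{H}^{0,1}(Y)$ for every $t$.

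This derivative comparison is where Theorem \ref{Bismut} enters, and it is the main obstacle: Bismut's theorem is a statement about curvatures, hence about images in Bott--Chern cohomology, and is by itself blind to the transcendental $\M{Pic}^0$-part that (a) is really about. My plan is to apply it not fibrewise but to $\ti{f}\colon X\times\Delta\to Y\times\Delta$, after reducing to the locus where the $\M{R}^i\ti{f}_*\,\ti{\eee}$ are locally free\,---\,this locus is dense in $Y\times\Delta$, and since both sides of (a) are analytic it is enough to work there. The Grothendieck--Riemann--Roch equality so obtained lives in the Bott--Chern cohomology of $Y\times\Delta$: its purely horizontal $(1,1)$-component merely reproduces, slice by slice, the curvature of the Quillen metric on the determinant and carries no Picard information, whereas the mixed component pairing the holomorphic $\Delta$-direction with the anti-holomorphic $Y$-directions computes exactly the $\M{H}^{0,1}(Y)$-valued derivative of $t\mapsto[\ddet\M{R}f_*\,\eee_t]$ and identifies it with the $t$-derivative of $f_*\{\M{ch}^{\M{D}}(\eee_t)\,\M{td}^{\M{D}}(\M{T}_{X/Y})\}$. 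The delicate points, on which I expect the whole argument to hinge, are extracting this mixed component cleanly from Bismut's identity and controlling the reduction to the locally free locus so that nothing is lost when restricting back to each slice $Y\times\{t\}$; integrating the resulting equality of derivatives from $0$, with $\alpha(0)=0$, then yields (a).
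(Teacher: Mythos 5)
Your treatment of everything except identity (a) is sound and essentially matches the paper: gluing the family to a bundle on $X\times\Delta$, using the Knudsen--Mumford determinant on $Y\times\Delta$ together with base change to slices (this is Proposition \ref{bc}, which does need the short derived-category argument plus Lemma \ref{pullback}), analyticity via Lemma \ref{analytic}, and the formal uniqueness and ``in particular'' reductions. The genuine gap is exactly where you locate it: the proof of (a) via Theorem \ref{Bismut}, and it is not merely delicate but faces a real obstruction. Bismut's theorem requires the sheaves $\M{R}^i\ti{f}_*\,\ti{\eee}$ to be locally free, so you can only invoke it over the open set $U=(Y\times\Delta)\setminus Z$, where $Z$ is the (possibly codimension-one) analytic set where local freeness fails. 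The Bott--Chern identity you obtain holds modulo $\partial\bar\partial C^{\infty}(U)$, and your mixed-component extraction then produces, for each $t$, an equality of derivatives only in $\M{H}^{0,1}(Y_t)$, where $Y_t=\{y\in Y:(y,t)\notin Z\}$ is a proper open subset of $Y$. The restriction map $\M{H}^1(Y,\oo_Y)\to \M{H}^1(Y_t,\oo_{Y_t})$ is in general far from injective --- if $Y$ is a compact curve and $Y_t$ the complement of a point, the target vanishes --- so the identity on $Y_t$ gives no control of the global derivative in $\M{H}^1(Y,\oo_Y)$, which is precisely the transcendental quantity that (a) is about. Bridging this would amount to removing the local-freeness hypothesis from Bismut's theorem, which is not available. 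A second, subsidiary gap: your argument needs the $t$-derivative of the Deligne-theoretic curve $t\mapsto[f_*\{\M{ch}^{\M{D}}(\eee_t)\,\M{td}^{\M{D}}(\M{T}_{X/Y})\}]^{(2)}$ to be computed by the mixed component of the Chern--Weil fibre integral; this compatibility, and even the analyticity of that curve (which in the paper is a \emph{consequence} of the theorem, not an input), is left unproved.

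The missing idea is that one can avoid metrics and curvature altogether. The paper forms the single class
\[
\alpha=\M{c}_1^{\M{D}}\bigl(\lambda(\eee,f\times\M{id})\bigr)-\bigl[(f\times\M{id})_*\bigl(\M{ch}^{\M{D}}(\eee)\,\M{td}^{\M{D}}(\M{T}_{X\times\Delta/Y\times\Delta})\bigr)\bigr]^{(2)}
\]
in $\M{H}^2_{\M{D}}(Y\times\Delta,\Q(1))$, kills its Hodge component using Lemma \ref{classic} together with the O'Brian--Toledo--Tong Grothendieck--Riemann--Roch theorem in Hodge cohomology --- which, unlike Theorem \ref{Bismut}, has no local-freeness hypothesis, so no bad locus ever appears --- and then applies Lemma \ref{trick}: the exact sequence $0\to\M{pr}_1^{-1}\,\Q_{\M{D},Y}(1)\to\Q_{\M{D},Y\times\Delta}(1)\to\oo_Y\boxtimes\Omega^1_{\Delta}[-1]\to 0$ shows that a Deligne class on $Y\times\Delta$ with vanishing Hodge component is pulled back from $Y$, hence has constant restriction to the slices $Y\times\{t\}$. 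Computing those restrictions by the projection formula yields (a) and (b) simultaneously. This one cohomological lemma does all the work you were asking of Bismut's curvature formula.
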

Even if $\T{Pic}(Y)$ is not a complex manifold, it is possible to give a precise definition of an analytic curve with values in $\T{Pic}(Y)$ that matches with the usual one when the image of $\T{H}^1(Y, \Z_Y)$ is discrete in $\T{H}^1(Y, \oo_Y)$.
\par \medskip
On the one hand, Theorem  \ref{main} is motivated by Teleman's program on the classification of class VII surfaces (see \cite{T1}, \cite{T2}). On the other hand, it is a significant step towards the general Grothendieck-Rieman-Roch theorem in Deligne cohomology, at least in degree two. For instance, we have the following result:
\begin{theorem} \label{second}
Let $\, Y$ and $F$ be complex manifolds such that $F$ is compact, and let $p \colon Y \times F \rightarrow Y$ be the first projection. Then, for any $\mathcal{L}$ in $\M{Pic}^0(Y \times F)$
\[
\M{c}_1^{\M{D}} [\ddet \M{R} p_* \, \mathcal{L}] = [p_*\{ \M{ch}^{\M{D}}(\mathcal{L}) \, \M{td}^{\M{D}}(\M{T}_{Y \times F/Y})\}]^{(2)}
\]
in the rational Deligne cohomology group $\M{H}^2_{\M{D}} (Y, \Q(1))$.
\par \medskip
\end{theorem}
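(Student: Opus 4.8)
The plan is to deduce Theorem \ref{second} from Theorem \ref{main} by deforming $\mathcal{L}$ to the trivial line bundle $\oo_{Y\times F}$ inside $\M{Pic}^0(Y\times F)$ and checking that both sides of the asserted identity already vanish for the trivial bundle. Note first that $p$ is a proper holomorphic submersion, since $F$ is compact, so Theorem \ref{main} does apply with $X = Y\times F$, $f = p$, and any holomorphic family of line bundles on $Y\times F$.

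First I would build the deformation. By definition of $\M{Pic}^0$, the class $\mathcal{L}$ is the image under the exponential map $\M{H}^1(Y\times F,\oo) \to \M{Pic}(Y\times F)$ of some $\xi \in \M{H}^1(Y\times F,\oo)$. Representing $\xi$ by a \v{C}ech cocycle $(\xi_{ij})$, the transition functions $\exp(t\,\xi_{ij})$ define a holomorphic family of line bundles $(\mathcal{L}_t)_{t\in\Delta}$ on $Y\times F$ with $\mathcal{L}_0 = \oo_{Y\times F}$ and, after a linear reparametrization of $\Delta$, $\mathcal{L}_{t_0} = \mathcal{L}$ for some $t_0 \in \Delta$; this is precisely an analytic curve in $\M{Pic}^0$ in the sense used in Theorem \ref{main}. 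Applying the last assertion of Theorem \ref{main} to this family, the class
\[
\M{c}_1^{\M{D}}[\ddet \M{R} p_* \, \mathcal{L}_t] - [p_*\{\M{ch}^{\M{D}}(\mathcal{L}_t)\,\M{td}^{\M{D}}(\M{T}_{Y\times F/Y})\}]^{(2)} \in \M{H}^2_{\M{D}}(Y,\Q(1))
\]
is independent of $t$, so it suffices to show that it vanishes at $t = 0$.

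It then remains to treat the trivial bundle $\oo_{Y\times F}$. For the left-hand side, flat base change (equivalently, the K\"unneth formula) gives $\M{R}^i p_* \, \oo_{Y\times F} \cong \M{H}^i(F,\oo_F) \otimes_\C \oo_Y$, which is a free $\oo_Y$-module; hence $\ddet \M{R} p_* \, \oo_{Y\times F}$ is the trivial line bundle and its Deligne first Chern class is zero. For the right-hand side, $\M{ch}^{\M{D}}(\oo_{Y\times F}) = 1$ and $\M{T}_{Y\times F/Y}$ is the pullback of $\M{T}_F$ by the second projection $q \colon Y\times F \to F$, so the integrand equals $q^*\,\M{td}^{\M{D}}(\M{T}_F)$. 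Writing $n = \dim_\C F$, the Gysin map $p_*$ lowers the Deligne degree by $2n$, so $[p_*\{q^*\M{td}^{\M{D}}(\M{T}_F)\}]^{(2)}$ is the image under $p_*$ of the weight-$(n+1)$ component of $q^*\M{td}^{\M{D}}(\M{T}_F)$, which is the pullback of a class lying in $\M{H}^{2n+2}_{\M{D}}(F,\Q(n+1))$. This last group vanishes for dimension reasons: the Deligne complex computing it involves only the sheaves $\Z(n+1), \oo_F, \ldots, \Omega^n_F$, whose hypercohomology on the $n$-dimensional manifold $F$ is concentrated in degrees $\le 2n+1$. Hence this component is zero and the right-hand side vanishes as well. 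Both sides being $0$ at $t=0$, the $t$-independence yields the identity for $\mathcal{L} = \mathcal{L}_{t_0}$.

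The argument is short modulo Theorem \ref{main}, and I do not expect a serious obstacle; the only points needing care are the verification that the exponential family is admissible as a holomorphic family and qualifies as an analytic curve in $\M{Pic}^0$ in the paper's precise sense, and the bookkeeping identifying $[p_*\{\cdots\}]^{(2)}$ with the pushforward of the weight-$(n+1)$ part of the relative Todd class. The genuine content is the reduction itself: every element of $\M{Pic}^0$ deforms to $\oo_{Y\times F}$, for which Grothendieck-Riemann-Roch collapses to the triviality of $\ddet \M{R} p_* \, \oo_{Y\times F}$ together with the vanishing of $\M{H}^{2n+2}_{\M{D}}(F,\Q(n+1))$.
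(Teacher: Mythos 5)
Your proposal is correct and follows essentially the same route as the paper: deform $\mathcal{L}$ to $\oo_{Y\times F}$ inside $\M{Pic}^0(Y\times F)$ via a holomorphic family (the paper merely asserts its existence, you construct it via the exponential of a scaled \v{C}ech cocycle), invoke the last assertion of Theorem \ref{main} to reduce to the trivial bundle, and then check both sides vanish — the left via the K\"unneth/base-change triviality of $\ddet \M{R} p_* \, \oo_{Y\times F}$, the right because the relative Todd class $1 \boxtimes \M{td}^{\M{D}}(F)$ has no components above degree $2n$, so its pushforward is concentrated in degree zero (your formulation via the vanishing of $\M{H}^{2n+2}_{\M{D}}(F,\Q(n+1))$ is the same fact).
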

The paper is organized as follows: in \S \ref{det}, we recall the theory of determinants for coherent analytic sheaves (see \cite{KM} and \cite[\S 3]{BGS}) and we prove in Proposition \ref{bc} a base change formula for determinant bundles. We also prove a folklore result (Lemma \ref{classic}) saying that the first Chern class of a coherent sheaf in Hodge cohomology is the same as the first Chern class of its determinant. In \S \ref{del}, we recall the basics of Deligne cohomology (see \cite{EV} and \cite[Chap. 12]{V}) and Lemma \ref{trick} is the main ingredient of the proof of Theorem \ref{main}. Then we discuss analytic curves with values in the Picard group of a complex manifold. Lastly, \S \ref{pf} is devoted to the proofs of Theorem \ref{main} and Theorem \ref{second}.
\par \bigskip
\textbf{Acknowledgments} I wish to thank Andrei Teleman for suggesting the problem solved in this paper, and also Christophe Soul\'e for kindly explaining to me some of the material of \cite{BGS}.
\end{section}

\begin{section}{Holomorphic determinant bundles} \label{det}
Let $\ff$ be a coherent analytic sheaf on a connected complex manifold $X$. The determinant of $\ff$, denoted by $\T{det}\,  \ff$, is a holomorphic line bundle on $X$ defined as follows:
\par \smallskip
-- If $\ff$ is torsion-free, there exists a Zariski-open subset $U$ of $X$ such that $\ff$ is locally free on $U$ and $X\setminus U$ has codimension at least two in $X$. Then the top exterior power of $\ff_{\vert U}$ is a holomorphic line bundle on $U$, which extends uniquely to a line bundle $\T{det}\, \ff$ on $X$.
\par
-- If $\ff$ is a torsion sheaf, let $Z$ be the maximal closed hypersurface contained in $\T{supp}\, \ff$, and let $(Z_i)_{i \in I}$ be the irreducible components of $Z$. For any index $i$, if $\jj_{Z_i}$ denotes the ideal sheaf of $Z_i$, let $m_i$ be the smallest integer $m$ such that $\jj_{Z_i}^{m} \ff$ vanishes generically on $Z_i$. Then $\T{det}\, \ff= \bigotimes_i \oo_X(m_i Z_i)$.
\par
-- If $\ff$ is arbitrary and $\ff_{\T{tors}}$ is its maximal torsion subsheaf, then $\ff / \ff_{\T{tors}}$ is torsion-free and $\T{det}\, \ff=\ddet (\ff / \ff_{\T{tors}}) \otimes \ddet \ff_{\T{tors}}.$
\par \bigskip
The main property of determinants is the following:
for any bounded complex $\kk^{\bullet}$ of coherent sheaves on $X$,  using additive notation for line bundles, we have a canonical isomorphism
\[
\sum_i (-1)^i \,  \ddet {\kk}^i \simeq \sum_i (-1)^i \,  \ddet  \hh^i ({\kk^{\bullet}}).
\]
\par
For any bounded complex $\kk^{\bullet}$ on $X$ with coherent cohomology, the determinant of $\kk^{\bullet}$ is defined by the formula $\ddet \kk^{\bullet} = \sum_i (-1)^i \,  \ddet \, \hh^i ({\kk}^{\bullet})$. Two quasi-isomorphic bounded complexes with coherent cohomology have canonically isomorphic determinants.

\begin{lemma} \label{pullback} Let $\varphi \colon X \rightarrow Y$ be a holomorphic map between connected complex manifolds. Then for any bounded complex $\kk^{\bullet}$ of analytic sheaves on $Y$ with coherent cohomology, if $\mathbb{L} \varphi^*$ denotes the derived pullback by $\varphi$, then $\varphi ^* \,\M{det}\,  \kk^{\bullet} \simeq \M{det}\, (\mathbb{L} \varphi^* \, \kk^{\bullet})$.
\end{lemma}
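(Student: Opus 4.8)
The plan is to reduce the statement to the case where $\kk^{\bullet}$ is a bounded complex of locally free sheaves, for which the determinant is computed termwise. Since $X$ and $Y$ are smooth, $\varphi$ has finite Tor-dimension, so $\mathbb{L} \varphi^*$ preserves bounded complexes with coherent cohomology; hence $\ddet (\mathbb{L} \varphi^* \, \kk^{\bullet})$ is a well-defined line bundle on $X$ and the statement makes sense. Being an isomorphism of line bundles on $X$, it is a local question; and because the determinant of a complex with coherent cohomology is canonical and invariant under quasi-isomorphism, it suffices to construct the isomorphism locally on $X$ and to check that the local isomorphisms so obtained are canonical, so that they glue to a global one.

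First I would fix $x$ in $X$, put $y = \varphi(x)$, and exploit that the local ring $\oo_{Y,y}$ is regular of finite dimension. Consequently, on a suitable open neighbourhood $V$ of $y$, the complex $\kk^{\bullet}_{\vert V}$ is quasi-isomorphic to a bounded complex $\eee^{\bullet}$ of locally free $\oo_V$-modules. Over $V$, invariance under quasi-isomorphism together with the main property of determinants gives $\ddet \kk^{\bullet} \simeq \ddet \eee^{\bullet} \simeq \sum_i (-1)^i \, \ddet \eee^i = \sum_i (-1)^i \, \bwe^{\, \T{top}} \eee^i$, using additive notation for line bundles.

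Next, since each $\eee^i$ is flat, the ordinary pullback $\varphi^* \eee^{\bullet}$ represents $\mathbb{L} \varphi^* \, \kk^{\bullet}$ over $U = \varphi^{-1}(V)$, which contains $x$. Applying the same computation on $U$ yields $\ddet (\mathbb{L} \varphi^* \, \kk^{\bullet}) \simeq \ddet (\varphi^* \eee^{\bullet}) \simeq \sum_i (-1)^i \, \ddet (\varphi^* \eee^i)$. For a locally free sheaf the determinant is just its top exterior power, which commutes with pullback, so $\ddet (\varphi^* \eee^i) = \bwe^{\, \T{top}} \varphi^* \eee^i = \varphi^* \bwe^{\, \T{top}} \eee^i = \varphi^* \ddet \eee^i$. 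Combining the two computations, $\ddet (\mathbb{L} \varphi^* \, \kk^{\bullet}) \simeq \varphi^* \bigl( \sum_i (-1)^i \, \ddet \eee^i \bigr) \simeq \varphi^* \, \ddet \kk^{\bullet}$ over $U$; as $x$ varies, the opens $U = \varphi^{-1}(V)$ cover $X$.

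The remaining, and only delicate, point is that these local isomorphisms do not depend on the chosen locally free resolution $\eee^{\bullet}$, so that they agree on overlaps and glue to a global isomorphism on $X$. This is exactly where the canonicity built into the Knudsen--Mumford determinant functor is used: two locally free resolutions of $\kk^{\bullet}_{\vert V}$ are linked by a chain of quasi-isomorphisms under which all the identifications above are compatible, and pullback by $\varphi$ preserves this compatibility. I expect this bookkeeping to be the main obstacle, the earlier steps being formal consequences of the definitions and of the multiplicativity of the determinant.
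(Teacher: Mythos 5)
Your proof is correct and follows essentially the same route as the paper's: resolve locally by a bounded complex of locally free sheaves, use that the determinant of a locally free sheaf commutes with pullback termwise, and glue the canonical local isomorphisms. The only cosmetic difference is that the paper first reduces by d\'evissage to a single coherent sheaf before resolving it on Stein open subsets of $Y$, whereas you resolve the whole complex locally near each point of $X$; the key computation and the gluing-by-canonicity step are identical.
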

\begin{proof}
By d\'{e}vissage, we are reduced to prove the lemma when $\kk^{\bullet}$ is a single coherent sheaf in degree zero.
For any Stein subset $U$ of $Y$, let  $\eee^{\bullet}$ be a locally free resolution of $\kk_{|U}$. Then we have canonical isomorphisms
\[ \varphi^* \ddet \kk_{|U} \simeq \varphi ^* [\sum_i (-1)^i \, \ddet (\eee^i)] = \sum_i (-1)^i \, \ddet (\varphi^* \eee^i) = \ddet (\mathbb{L} \varphi^* \, \kk _{|U})
\]
which can be glued together to give a global isomorphism on $X$ between $\varphi^* \ddet \kk$ and $\ddet (\mathbb{L} \varphi^* \, \kk)$.
\end{proof}
\par \bigskip
For any coherent sheaf $\ff$ on $X$, we denote by $\T{c}_i^{\T{H}}(\ff)$ the Chern classes of $\ff$ in $\T{H}^{\,i}(X, \Omega_X^i)$ and by $\T{ch}^{\T{H}}(\ff)$ its Chern character in the total Hodge cohomology ring of $X$.

\begin{lemma} \label{classic}
For any complex manifold $X$ and any coherent analytic sheaf $\ff$ on $X$, we have $\M{c}_1^{\M{H}}(\ff)=\M{c}_1^{\M{H}}(\M{det}\, {\ff})$ in $\M{H}^{\,1}(X, \Omega_X^1)$.
\end{lemma}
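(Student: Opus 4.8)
The plan is to show that both maps $\ff \mapsto \M{c}_1^{\M{H}}(\ff)$ and $\ff \mapsto \M{c}_1^{\M{H}}(\ddet\, \ff)$ are additive on short exact sequences, and then to compare them by d\'evissage. Additivity of the first is the Whitney formula in degree one; additivity of the second follows from the canonical isomorphism $\ddet\, \ff \simeq \ddet\, \ff' \otimes \ddet\, \ff''$ attached to any short exact sequence $0 \rightarrow \ff' \rightarrow \ff \rightarrow \ff'' \rightarrow 0$, which is a special case of the main property of determinants recalled above. Hence the difference $\delta(\ff) = \M{c}_1^{\M{H}}(\ff) - \M{c}_1^{\M{H}}(\ddet\, \ff)$ is itself additive on short exact sequences, and the sequence $0 \rightarrow \ff_{\T{tors}} \rightarrow \ff \rightarrow \ff / \ff_{\T{tors}} \rightarrow 0$ reduces the proof to the two cases where $\ff$ is torsion-free and where $\ff$ is a torsion sheaf.

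For $\ff$ torsion-free, I would take the Zariski-open set $U$ of the definition, on which $\ff$ is locally free and whose complement has codimension at least two. On $U$ the equality $\M{c}_1^{\M{H}}(\ff_{|U}) = \M{c}_1^{\M{H}}(\ddet\, \ff_{|U})$ is the classical identity for holomorphic vector bundles, obtained from the splitting principle. Since $\Omega_X^1$ is locally free and $X \setminus U$ has codimension at least two, the local cohomology sheaves $\hh^{\, j}_{X \setminus U}(\Omega_X^1)$ vanish for $j \leq 1$, so the restriction map $\M{H}^{\,1}(X, \Omega_X^1) \rightarrow \M{H}^{\,1}(U, \Omega_U^1)$ is injective. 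As $\ff_{|U}$ is locally free and $(\ddet\, \ff)_{|U} = \ddet\, (\ff_{|U})$, both Chern classes restrict compatibly to $U$, and the equality propagates back to $X$.

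For the torsion case, I would first discard the part of $\T{supp}\, \ff$ of codimension at least two: a coherent sheaf supported there has vanishing $\M{c}_1^{\M{H}}$ (its Hodge Chern character starts in degree at least two) and it does not contribute to $\ddet\, \ff$, so one may assume $\ff$ is supported on the hypersurface $Z = \bigcup_i Z_i$. Restricting to an open set $V$ whose complement has codimension at least two and on which the $Z_i$ are smooth and pairwise disjoint, I would filter $\ff_{|V}$ so that each graded piece is the direct image $\iota_{i*}\, \g$ of a coherent sheaf $\g$ on $Z_i \cap V$ under the inclusion $\iota_i$; the total generic rank of the pieces supported along $Z_i$ is exactly the integer $m_i$ governing $\ddet\, \ff$ along $Z_i$, namely the generic length $\T{length}_{\oo_{X, \eta_i}} \ff_{\eta_i}$ at the generic point $\eta_i$ of $Z_i$. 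From the sequence $0 \rightarrow \oo_X(-Z_i) \rightarrow \oo_X \rightarrow \iota_{i*}\, \oo_{Z_i} \rightarrow 0$ one reads off $\M{c}_1^{\M{H}}(\iota_{i*}\, \oo_{Z_i}) = [Z_i]$, so additivity gives $\M{c}_1^{\M{H}}(\ff_{|V}) = \sum_i m_i\, [Z_i]_{|V} = \M{c}_1^{\M{H}}(\ddet\, \ff)_{|V}$, and the codimension-two injectivity of restriction again upgrades this to $X$.

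The main obstacle is the torsion case: one must produce the filtration of $\ff_{|V}$ whose graded pieces are pushed forward from the smooth loci of the components $Z_i$, and verify that the cumulative generic rank of these pieces along each $Z_i$ coincides with the multiplicity $m_i$ entering the definition of $\ddet\, \ff$. Once this bookkeeping of multiplicities is settled, everything else rests on the classical vector-bundle identity together with the repeatedly-used injectivity of restriction across analytic subsets of codimension at least two.
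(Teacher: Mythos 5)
Your overall architecture --- additivity of the difference $\delta(\ff)=\M{c}_1^{\M{H}}(\ff)-\M{c}_1^{\M{H}}(\ddet \ff)$, d\'evissage into the torsion and torsion-free parts, restriction to an open set with codimension-two complement, and the classical identity for locally free sheaves --- is essentially the paper's, and those steps are sound. The genuine gap is in the torsion case, at the words ``additivity gives $\M{c}_1^{\M{H}}(\ff_{|V}) = \sum_i m_i\,[Z_i]_{|V}$''. After filtering $\ff_{|V}$ by the powers $\jj_{Z_i}^k\,\ff_{|V}$, the graded pieces are $\iota_{i*}\,\g_k$ for coherent sheaves $\g_k$ on $Z_i\cap V$ which, even after shrinking $V$ further, are locally free of some rank $r_k$ but otherwise arbitrary. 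Your structure sequence $0 \rightarrow \oo_X(-Z_i) \rightarrow \oo_X \rightarrow \iota_{i*}\oo_{Z_i} \rightarrow 0$ computes $\M{c}_1^{\M{H}}$ only for $\iota_{i*}\oo_{Z_i}$, its direct sums, and their twists by line bundles defined on all of $X$. It does not compute $\M{c}_1^{\M{H}}(\iota_{i*}\g_k)$ for general $\g_k$: no amount of additivity reduces $\iota_{i*}\g_k$ to copies of $\iota_{i*}\oo_{Z_i}$, because on an arbitrary complex manifold a vector bundle on $Z_i$ need not extend to $X$, need not be a quotient of the restriction of a bundle on $X$ (so $\iota_{i*}\g_k$ need not admit any global locally free resolution), and need not admit a filtration with quotients of a known form; nor can this be repaired by deleting further codimension-two subsets of $X$, since those only delete divisors of $Z_i$, and $Z_i$ may carry no divisors and $\g_k$ no nonzero meromorphic sections at all. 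The assertion $\M{c}_1^{\M{H}}(\iota_{i*}\g)=\T{rk}(\g)\,[Z_i]_{\T{H}}$ --- in particular its independence of the ``twist'' $\g$ --- is precisely the degree-one consequence of the Grothendieck-Riemann-Roch theorem for analytic immersions in Hodge cohomology, and this is the external input you are missing. The paper's proof is your proof with this ingredient inserted: it d\'evissages to the case $\jj_Z\ff=0$ and then cites O'Brian-Toledo-Tong to get $\M{c}_1^{\M{H}}(\ff)=[Z]_{\T{H}}$ (with the appropriate multiplicity). On an abstract complex manifold there is no elementary substitute for this step; it is the heart of the lemma.

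Two smaller remarks. First, your claim that a sheaf supported in codimension at least two has vanishing $\M{c}_1^{\M{H}}$ is justified by a GRR-type statement (``the Chern character starts in degree two''), which begs the same question; but this point can be rescued elementarily by your own injectivity trick, restricting to the complement of the support. Second, you read $m_i$ as the generic length of $\ff$ along $Z_i$; this is indeed what makes your bookkeeping (sum of the generic ranks of the graded pieces equals $m_i$) correct, and it is the reading consistent with the additivity of determinants, whereas the paper's literal definition of $m_i$ (smallest $m$ with $\jj_{Z_i}^m\ff$ generically zero) would not match a sheaf such as $\iota_{i*}\oo_{Z_i}^{\oplus 2}$. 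That correction is to your credit, but it does not close the gap above.
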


\begin{proof}
We can assume that $\ff$ is either a torsion sheaf or a torsion-free sheaf. Besides, it is enough to prove that $\T{c}_1^{\T{H}}(\ff_{\vert U})=\T{c}_1^{\T{H}}(\ddet{\ff_{\vert U}})$ where $U$ is a Zariski open subset of $X$ such that $\T{codim}_{X}(X\setminus U)\geq 2$. Therefore, we have to deal with two different cases:
\par \smallskip
-- First case: the sheaf $\ff$ is a torsion sheaf whose support is a smooth hypersurface. By d\'{e}vissage, it is possible to assume without loss of generality that $\jj_Z \ff=0$, so that $\ddet{\ff}=\oo_X(Z)$. Then, using the Grothendieck-Riemann-Roch theorem in Hodge cohomology for immersions \cite{OTT}, we get $\M{c}_1^{\M{H}}(\ff)=[Z]_{\T{H}}=\M{c}_1^{\M{H}}(\oo_X(Z))=\M{c}_1^{\M{H}}(\M{det}\, {\ff}).$
\par
-- Second case: the sheaf $\ff$ is locally free. Then we know that $\M{c}_1^{\M{H}}(\ff)=\M{c}_1^{\M{H}}(\M{det}\, {\ff})$.
\end{proof}

Let $f \colon X \rightarrow Y$ be a proper holomorphic submersion between two connected complex manifolds $X$ and $Y,$ and let $\eee$ be a locally free sheaf on $X$. By Grauert-Riemenschneider's theorem, the bounded complex $\T{R} f_* \, \eee$ has coherent cohomology.

\begin{definition}
The determinant of the cohomology  $\lambda (\eee, f)$ attached to the couple $(\eee, f)$ is the class of $\ddet (\T{R} f_* \, \eee)$ in $\M{Pic}(Y)$.
\end{definition}
We now state and prove a base change theorem for the determinant of the cohomology. Let $T$ be a complex manifold and $u \colon T \rightarrow Y$ be a closed immersion, and consider the cartesian diagram

\[
\xymatrix {
Z\ar@{}[dr]|{\square} \ar[r]^-{v} \ar[d]_-{g}& X \ar[d]^-{f} \\
T  \ar[r]_{u} & Y
}
\]

\begin{proposition} \label{bc} For any vector bundle $\eee$ on $X,$ $u^* \lambda(\eee, f) = \lambda(v^* \eee, g)$ in $\M{Pic}(T).$
\end{proposition}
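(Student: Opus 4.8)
The plan is to reduce the statement to a single derived base-change isomorphism and then feed it into the functoriality of determinants recorded in Lemma \ref{pullback}. The key claim is that the canonical base-change morphism
\[
\mathbb{L} u^* \, \M{R} f_* \, \eee \longrightarrow \M{R} g_* \, v^* \, \eee
\]
is an isomorphism in the derived category of bounded complexes with coherent cohomology on $T$. Here $v^* \eee = \mathbb{L} v^* \eee$ because $\eee$ is locally free, and $\M{R} g_* \, v^* \eee$ has coherent cohomology by Grauert's direct image theorem, $g$ being a proper submersion as the base change of $f$; hence $\lambda(v^*\eee, g)$ is well defined. Granting the claim, Lemma \ref{pullback} applied to $u$ gives a canonical isomorphism $u^* \ddet (\M{R} f_* \eee) \simeq \ddet (\mathbb{L} u^* \M{R} f_* \eee)$, and since quasi-isomorphic bounded complexes with coherent cohomology have canonically isomorphic determinants, the base-change isomorphism yields $\ddet (\mathbb{L} u^* \M{R} f_* \eee) \simeq \ddet (\M{R} g_* v^* \eee)$. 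Chaining these identifications produces exactly $u^* \lambda(\eee, f) = \lambda(v^* \eee, g)$ in $\M{Pic}(T)$.

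The verification of the base-change isomorphism is the heart of the matter, and I would argue locally on $Y$, the morphism being canonical so that local isomorphisms glue. Since $u$ is a closed immersion of complex manifolds, $T$ is locally cut out in $Y$ by a regular sequence of holomorphic functions $s_1, \ldots, s_k$, and the associated Koszul complex $\kk^{\bullet}$ is a finite locally free resolution of $\oo_T$ over $\oo_Y$. Because $f$ is a submersion, hence flat, the pulled-back functions $f^* s_1, \ldots, f^* s_k$ again form a regular sequence, cutting out the submanifold $Z = f^{-1}(T)$; their Koszul complex $f^* \kk^{\bullet}$ is therefore a locally free resolution of $\oo_Z$ on $X$, which represents $\mathbb{L} f^* \oo_T \simeq \oo_Z$. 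Flatness then gives $v^* \eee \simeq \eee \otimes_{\oo_X} f^* \kk^{\bullet}$ in the derived category.

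It remains to commute $\M{R} f_*$ past the tensor product with the Koszul complex. As $\kk^{\bullet}$ is a bounded complex of free $\oo_Y$-modules, the projection formula yields natural isomorphisms
\[
\M{R} f_* (\eee) \otimes_{\oo_Y} \kk^{\bullet} \simeq \M{R} f_* (\eee \otimes_{\oo_X} f^* \kk^{\bullet}) \simeq \M{R} f_* (v_* v^* \eee) = \M{R} (u g)_* \, v^* \eee = u_* \, \M{R} g_* \, v^* \eee,
\]
using $f \circ v = u \circ g$ and the exactness of $u_*$. Since the left-hand side represents $u_* \mathbb{L} u^* \M{R} f_* \eee$ and $u_*$ is faithful, this is precisely the desired base-change isomorphism. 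I expect the main obstacle to lie in this last step: one must ensure that the chain of canonical maps agrees with the global base-change morphism so that the local isomorphisms are compatible, and that the coherence and boundedness hypotheses needed for the projection formula and for the invariance of the determinant under quasi-isomorphism hold throughout. The flatness of $f$ together with the regularity of the immersion $u$ is exactly what makes all the Koszul computations exact and forces the higher Tor-terms to vanish.
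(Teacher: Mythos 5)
Your proof is correct and rests on the same two pillars as the paper's: the derived base-change isomorphism $\mathbb{L} u^* \, \M{R} f_* \, \eee \simeq \M{R} g_* \, (v^* \eee)$ (equivalently, sheaf isomorphisms $\hh^i(\mathbb{L} u^* \, \M{R} f_* \, \eee) \simeq \M{R}^i g_* (v^*\eee)$ for all $i$), followed by Lemma \ref{pullback} and the invariance of determinants under quasi-isomorphism. The difference lies in how base change is established. The paper never localizes: working in the bounded derived category of $Y$ it proves
\[
\M{R}f_*\, \eee \, \lltens[\oo_Y] u_* \oo_T \simeq \M{R}f_*\bigl(\eee \lltens[\oo_X] f^* u_* \oo_T\bigr) \simeq \M{R}f_*\bigl(\eee \lltens[\oo_X] v_* \oo_Z\bigr) \simeq u_*\, \M{R}g_*\, (v^*\eee)
\]
using the projection formula for $f$ and for $v$ together with the identification $f^* u_* \oo_T \simeq v_* g^* \oo_T$ (flatness of $f$), and then reads off the cohomology sheaves. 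You instead realize $\lltens[\oo_Y] u_*\oo_T$ through local Koszul resolutions of $\oo_T$, and this is exactly why you are forced to flag, at the end, the compatibility of your local isomorphisms with the globally defined canonical base-change morphism: abstract local isomorphisms neither glue nor show that the canonical map is invertible, so for your route that verification is a genuine obligation rather than a formality. Note, however, that for this particular proposition the canonical map is dispensable: since the conclusion is an equality in $\M{Pic}(T)$, all you need is \emph{some} global isomorphism between each pair of coherent sheaves $\hh^i(\mathbb{L}u^* \, \M{R} f_* \, \eee)$ and $\M{R}^i g_*(v^*\eee)$, and you obtain these by running your projection-formula display globally, with $u_*\oo_T$ and derived tensor products in place of $\kk^{\bullet}$ and ordinary ones — the Koszul complex then survives only as a local representative of $u_*\oo_T$, and the delicate point evaporates. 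That globalization of your argument is, almost word for word, the paper's proof; what the global version buys is the absence of any gluing issue, while what your local version buys is an explicit picture of why flatness of $f$ and regularity of the immersion $u$ make the derived pullback computable.
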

\begin{proof}
In the bounded derived category of $Y,$ by using the projection formula twice, we have
\begin{align*}
\T{R}f_*\, \eee \, \lltens[\mathcal{O}_Y] \,u_* \mathcal{O}_T &\simeq \T{R}f_* \,(\eee \, \lltens[\mathcal{O}_X] \, f^* u_* \mathcal{O}_T)
 \simeq \T{R}f_* \,(\eee \, \lltens[\mathcal{O}_X] \, v_* g^* \mathcal{O}_T) \simeq \T{R}f_*\,  \T{R}v_* \, (v^* \eee \, \lltens[\mathcal{O}_Z] \, g^* \mathcal{O}_T)\\
 &\simeq \T{R}u_*\,  \T{R}g_* \, (v^* \eee \, \lltens[\mathcal{O}_Z] \, g^* \mathcal{O}_T) \simeq u_* [\T{R}g_* \, (v^* \eee)].
\end{align*}
This proves that for any integer $i$
\[
\hh^i \,  (\mathbb{L} u^* \,  \T{R}f_* \eee) \simeq \T{R}^i g_* (v^* \eee).
\]
Then we can conclude using Lemma \ref{pullback}.
\end{proof}

\end{section}

\begin{section}{Deligne cohomology} \label{del}
For any complex manifold X and any nonnegative integer $p$, the Deligne complex $\Z_{\T{D}, X}(p)$ is the complex
\[
\xymatrix {
\Z_X \ar[r]^-{(2i \pi)^p} & \oo_X \ar[r]^-{\partial} & \Omega_X^1 \ar[r]^-{\partial}  & \cdots  \ar[r]^-{\partial} &\Omega_X^{p-1},\\
}
\]
\par \medskip
where the sheaf $\Z_X$ sits in degree zero. The integral Deligne cohomology groups of $X$ are defined by the formula $\T{H}^k_{\T{D}}(X, \Z(p))= \mathbb{H}^k(X, \Z_{\T{\T{D},X}}(p))$. Similar definitions hold for the rational Deligne complex $\Q_{\T{D},X}(p)$ as well as for the rational Deligne cohomology groups $\T{H}^k_{\T{D}}(X, \Q(p))$.
\par \medskip
For any locally-free sheaf $\eee$ on $X$, we will denote by $\T{c}_i^{\T{D}}(\eee)$ the rational Chern classes of $\eee$ in $\T{H}^{2i}_{\T{D}}(X, \Q(i))$, and by $\T{ch}^{\T{D}}(\eee)$ the Chern character of $\eee$. Recall that $\T{H}^{2}_{\T{D}}(X, \Z(1))$ is the Picard group of $X$, and that the kernel of the first Chern class morphism
\begin{equation} \label{c1}
\M{c}_1^{\M{D}} \colon \M{Pic} (X) \simeq \M{H}^2_{\M{D}}(X, \Z(1)) \rightarrow \M{H}^2_{\M{D}}(X, \Q(1))
\end{equation}
is exactly the maximal torsion subgroup of $\M{Pic}(X)$.
\par \medskip
There is a natural cup-product in Deligne cohomology (cf \cite{EV}). Besides, for any nonnegative integer $p$, the morphism $\partial \colon \Omega_X^{p-1} \rightarrow \Omega_X^p$ induces a morphism from $\Z_{\T{\T{D},X}}(p)$ to $\Omega_X^p [-p]$. Hence for any nonnegative integer $k$, we obtain a map from $\T{H}^{k+p}_{\T{D}}(X, \Z(p))$ to $\T{H}^k(X, \Omega_X^p)$ which is compatible with cup-products on both sides.
\par \medskip
We now give the key lemma of the proof of Theorem \ref{main}.
\begin{lemma} \label{trick}
Let $\Delta$ be the complex unit disc, let $X$ be a complex manifold and let $\alpha$ be a class in $\M{H}^2_{\M{D}}(X \times \Delta, \Q(1))$ whose image in $\M{H}^1(X \times \Delta, \Omega_{X \times \Delta}^1)$ vanishes. Then there exists a class $\beta$ in $\M{H}^2_{\M{D}}(X, \Q(1))$ such that $\alpha= \M{pr}_1^* \,\beta$.
\end{lemma}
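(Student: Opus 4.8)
The plan is to descend $\alpha$ along the projection by producing an explicit candidate for $\beta$ and checking that it works, the whole argument being organized around the exponential-type long exact sequence for $\Q_{\M{D}}(1)$. Write $W=X\times\Delta$, let $\M{pr}_1\colon W\to X$ be the projection and $i_0\colon X\to W$, $x\mapsto(x,0)$, the zero section, so that $\M{pr}_1\circ i_0=\M{id}_X$ and hence $i_0^*\,\M{pr}_1^*=\M{id}$ on every cohomology theory. The short exact sequence of complexes $0\to\oo_W[-1]\to\Q_{\M{D},W}(1)\to\Q_W\to0$ yields the exact sequence
\[
\M{H}^1(W,\Q)\xrightarrow{2i\pi}\M{H}^1(W,\oo_W)\xrightarrow{\ \delta\ }\M{H}^2_{\M{D}}(W,\Q(1))\xrightarrow{\ r\ }\M{H}^2(W,\Q)\xrightarrow{2i\pi}\M{H}^2(W,\oo_W),
\]
and I will use three features of the factor $\Delta$: it is contractible, so that $\M{pr}_1^*\colon\M{H}^k(X,\Q)\to\M{H}^k(W,\Q)$ is an isomorphism with inverse $i_0^*$; it is Stein; and it carries a global coordinate $t$.

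First I would set $\gamma=\alpha-\M{pr}_1^*\,i_0^*\alpha$. By naturality of $r$ and of the map to Hodge cohomology, together with $i_0^*\,\M{pr}_1^*=\M{id}$, the class $\gamma$ has vanishing image in $\M{H}^2(W,\Q)$ (because $\M{pr}_1^*i_0^*$ is the identity on $\M{H}^2(W,\Q)$) and vanishing image in $\M{H}^1(W,\Omega^1_W)$ (because $\alpha$ does, by hypothesis, and hence so does $\M{pr}_1^*i_0^*\alpha$). From $r(\gamma)=0$ and the exact sequence, $\gamma=\delta(\xi)$ for some $\xi\in\M{H}^1(W,\oo_W)$. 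The map to Hodge cohomology is induced by $\partial\colon\oo_W\to\Omega^1_W$, and on the image of $\delta$ it is computed by the induced map $\partial_*\colon\M{H}^1(W,\oo_W)\to\M{H}^1(W,\Omega^1_W)$; thus the vanishing of the Hodge image of $\gamma$ reads $\partial_*\xi=0$. Since $\M{pr}_1^*\,i_0^*\alpha$ is manifestly in the image of $\M{pr}_1^*$, it then suffices to show that $\gamma=\delta(\xi)$ lies in the image of $\M{pr}_1^*$ as well.

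The heart of the matter is the analysis of $\ker\partial_*$, and this is the step I expect to be the main obstacle, since $\M{H}^1(W,\oo_W)$ may be infinite-dimensional and badly behaved for non-compact $X$. Using the global coordinate $t$ one splits $\Omega^1_W\simeq\M{pr}_1^*\Omega^1_X\oplus\oo_W\,dt$, and the $\oo_W\,dt$–component of $\partial_*\xi$ is $(\partial_t)_*\xi$ under $\oo_W\,dt\simeq\oo_W$, where $\partial_t=\partial/\partial t$. Hence $\partial_*\xi=0$ forces $(\partial_t)_*\xi=0$. To identify this kernel I would avoid any topological tensor product and instead use the short exact sequence of sheaves on $W$
\[
0\longrightarrow\oo_W^{\,\partial_t=0}\longrightarrow\oo_W\xrightarrow{\ \partial_t\ }\oo_W\longrightarrow0,
\]
whose surjectivity is the local solvability $f\mapsto\int_0^t f$, and whose kernel $\oo_W^{\,\partial_t=0}$ is the sheaf of $t$–independent functions, identified with $\M{pr}_1^{-1}\oo_X$. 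Because the fibres of $\M{pr}_1$ are the contractible Stein discs, a base–change computation gives $\M{H}^q(W,\oo_W^{\,\partial_t=0})\simeq\M{H}^q(X,\oo_X)$, the isomorphism in degree one being $\M{pr}_1^*$. The long exact sequence of the displayed sheaf sequence then identifies $\ker(\partial_t)_*$ with $\M{pr}_1^*\,\M{H}^1(X,\oo_X)$, so $\xi=\M{pr}_1^*\xi_0$ for some $\xi_0\in\M{H}^1(X,\oo_X)$.

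Finally, by naturality of the connecting map $\delta$ with respect to $\M{pr}_1$, one gets $\gamma=\delta(\M{pr}_1^*\xi_0)=\M{pr}_1^*\,\delta_X(\xi_0)$, where $\delta_X$ is the analogous connecting map on $X$. Therefore $\alpha=\M{pr}_1^*\big(i_0^*\alpha+\delta_X(\xi_0)\big)$, and $\beta=i_0^*\alpha+\delta_X(\xi_0)$ is the desired class. Two points would have to be checked carefully in the write-up: the compatibility $c\circ\delta=\partial_*$ between the Hodge realization $c$ and the connecting map (a diagram chase in the hypercohomology of $\Q_{\M{D},W}(1)$), and the identification $\oo_W^{\,\partial_t=0}\simeq\M{pr}_1^{-1}\oo_X$ together with the computation of its cohomology; both are routine once the contractibility and Steinness of $\Delta$ are invoked. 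Note that only the $\oo_W\,dt$–part of the hypothesis is actually used, the relative part of the Hodge image playing no role in producing $\beta$.
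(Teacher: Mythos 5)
Your proof is correct, but it is organized quite differently from the paper's, so it is worth comparing the two. The paper proceeds in one step: it defines a map of complexes $\delta\colon\Q_{\M{D},X\times\Delta}(1)\to\oo_X\boxtimes\Omega^1_\Delta[-1]$ (the Hodge realization followed by projection onto the $dt$-part) and observes that its kernel is exactly $\M{pr}_1^{-1}\,\Q_{\M{D},X}(1)$, because $\ker\bigl(\partial_t\colon\oo_{X\times\Delta}\to\oo_{X\times\Delta}\bigr)=\M{pr}_1^{-1}\oo_X$ and $\Q_{X\times\Delta}=\M{pr}_1^{-1}\Q_X$. The hypercohomology long exact sequence of the resulting short exact sequence of complexes then reads
\[
\M{H}^2_{\M{D}}(X,\Q(1))\xrightarrow{\ \M{pr}_1^*\ }\M{H}^2_{\M{D}}(X\times\Delta,\Q(1))\xrightarrow{\ \delta\ }\M{H}^1(X\times\Delta,\oo_X\boxtimes\Omega^1_\Delta),
\]
and the hypothesis kills $\delta(\alpha)$, so $\alpha$ descends immediately. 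Your argument is precisely a two-step d\'evissage of this: your sheaf sequence $0\to\M{pr}_1^{-1}\oo_X\to\oo_W\xrightarrow{\partial_t}\oo_W\to0$ is the degree-one constituent of the paper's sequence of complexes, while the degree-zero constituent ($\Q_W=\M{pr}_1^{-1}\Q_X$) is what you handle separately by homotopy invariance of rational cohomology together with the section $i_0$. Both arguments ultimately rest on the same two facts: local solvability of $\partial/\partial t$ with kernel $\M{pr}_1^{-1}\oo_X$, and the identification of the cohomology of topologically pulled-back sheaves on $X\times\Delta$ with cohomology on $X$ (contractibility of the fibre). What the paper's packaging buys is brevity and the absence of any auxiliary input: no section, no separate topological step, no compatibility checks between connecting maps and realizations. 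What your version buys is an explicit formula $\beta=i_0^*\alpha+\delta_X(\xi_0)$ and the use of only the standard exponential-type exact sequence for $\Q_{\M{D}}(1)$, rather than a bespoke relative Deligne complex. Two minor remarks: Steinness of $\Delta$, which you list as an ingredient, is never actually used (sheaf-level surjectivity of $\partial_t$ is local integration, and the computation of $\M{H}^q(W,\M{pr}_1^{-1}\oo_X)$ is purely topological); and your closing observation that only the $dt$-component of the Hodge hypothesis matters is exactly what the paper's choice of target $\oo_X\boxtimes\Omega^1_\Delta[-1]$ makes structural.
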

\begin{proof}
Let $\delta$ be the morphism obtained by the composition
\[
\Q_{\T{D}, X \times \Delta}(1) \rightarrow \Omega_{X \times \Delta}^1 [-1] \rightarrow \oo_X \boxtimes \Omega_{\Delta}^1[-1].
\]
Then we have an exact sequence
\[
\xymatrix@C=0.5cm{
&0 \ar[r] &\T{pr}_1^{-1} \, \Q_{\T{D}, X}(1) \ar[r] &\Q_{\T{D}, X \times \Delta}(1) \ar[r]^-{\delta} & \oo_X \boxtimes \Omega_{\Delta}^1[-1] \ar[r] &0
}
\]
which yields the long exact sequence
\[
\xymatrix@C=0.6cm{
&\M{H}^2_{\M{D}}(X, \Q(1)) \ar[r]^-{\T{pr}_1^{*}} &\M{H}^2_{\M{D}}(X \times \Delta, \Q(1)) \ar[r]^-{\delta} & \T{H}^1 (X \times \Delta, \oo_X \boxtimes \Omega_{\Delta}^1).
}
\]
The lemma follows.
\end{proof}
To end this section, we discuss the notion of analytic curve in the Picard group of a complex manifold.
\par \medskip
For an arbitrary $X$, the group $\T{Pic}^0 (X)=\T{H}^1(X, \oo_X)/\T{H}^1(X, \Z_X)$ is generally not a complex mani\-fold since $\T{H}^1(X, \Z_X)$ is not always discrete in $\T{H}^1(X, \oo_X)$. However, we can give the following definition:

\begin{definition}
For any complex manifold $X$, a curve $\gamma \colon \Delta \rightarrow \T{Pic}(X)$ is analytic if the curve $t \rightarrow \gamma(t)-\gamma(0)$ can be lifted to a holomorphic map  with values in $\T{H}^1(X, \oo_X)$.
\end{definition}
Then we have:
\begin{lemma} \label{analytic} $ $
\emph{(i)} If $\gamma \colon \Delta \rightarrow \M{Pic}(X)$ is an analytic curve, then $\gamma$ is entirely determined by $\gamma(0)$ and by the curve $\M{c}_1^{\M{D}} \circ \gamma$ from $\Delta$ to $\M{H}^2_{\M{D}}(X, \Q(1))$, where $\M{c}_1^{\M{D}}$ is given by \emph{(\ref{c1})}.
\par \smallskip
\emph{(ii)} For any class $\alpha$ in $\M{Pic}(X \times \Delta)$ the curve $\gamma \colon \Delta \rightarrow \M{Pic}(X)$ defined by $\gamma(t)=\alpha _{|X \times \{t\}}$ is analytic.
\end{lemma}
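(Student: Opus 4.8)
The plan is to deduce both statements from the two exact sequences governing $\M{H}^2_{\M{D}}(X, \Q(1))$ and $\M{Pic}(X)$. On the one hand, the exponential sequence identifies $\M{Pic}^0(X)$ with $\M{H}^1(X, \oo_X)/\Lambda$, where $\Lambda$ is the image of $\M{H}^1(X, \Z_X)$. On the other hand, the short exact sequence of complexes $0 \to \oo_X[-1] \to \Q_{\M{D}, X}(1) \to \Q_X \to 0$ yields a morphism $\iota \colon \M{H}^1(X, \oo_X) \to \M{H}^2_{\M{D}}(X, \Q(1))$ whose kernel is the image $\Lambda_\Q$ of $\M{H}^1(X, \Q_X)$. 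These two sequences are compatible, so that on $\M{Pic}^0(X)$ the morphism $\M{c}_1^{\M{D}}$ of \emph{(\ref{c1})} is induced by $\iota$; in particular $\M{c}_1^{\M{D}}$ annihilates exactly the torsion subgroup $\Lambda_\Q/\Lambda$, matching the stated description of its kernel.

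For \emph{(i)}, let $\gamma_1$ and $\gamma_2$ be analytic with $\gamma_1(0) = \gamma_2(0)$ and $\M{c}_1^{\M{D}} \circ \gamma_1 = \M{c}_1^{\M{D}} \circ \gamma_2$, and choose holomorphic lifts $\ti\gamma_i \colon \Delta \to \M{H}^1(X, \oo_X)$ of $t \mapsto \gamma_i(t) - \gamma_i(0)$. Their difference $\ti\eta = \ti\gamma_1 - \ti\gamma_2$ is holomorphic and lifts $\eta(t) := \gamma_1(t) - \gamma_2(t) \in \M{Pic}^0(X)$; since $\M{c}_1^{\M{D}} \circ \eta = \iota \circ \ti\eta$ vanishes, we get $\ti\eta(t) \in \ker \iota = \Lambda_\Q$ for all $t$. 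As $X$ is second countable, $\M{H}^1(X, \Q_X)$, hence $\Lambda_\Q$, is countable. The conclusion then follows from the rigidity principle that a holomorphic curve in $\M{H}^1(X, \oo_X)$ with countable image is constant: composing with any continuous $\C$-linear form turns $\ti\eta$ into a scalar holomorphic function with countable image, necessarily constant by the open mapping theorem. Thus $\ti\eta$ is constant and $\eta \equiv \eta(0) = \gamma_1(0) - \gamma_2(0) = 0$, i.e. $\gamma_1 = \gamma_2$.

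For \emph{(ii)}, set $\beta = \alpha \otimes \M{pr}_1^*(\alpha_{|X \times \{0\}})^{-1} \in \M{Pic}(X \times \Delta)$, a line bundle trivial on $X \times \{0\}$. As $\Delta$ is contractible, $\M{pr}_1^* \colon \M{H}^2(X, \Z) \to \M{H}^2(X \times \Delta, \Z)$ is an isomorphism inverted by restriction to any fibre $X \times \{t\}$; since the topological first Chern class of $\beta$ vanishes on $X \times \{0\}$, it vanishes on all of $X \times \Delta$. The exponential sequence on $X \times \Delta$ then lifts $\beta$ to a class $b \in \M{H}^1(X \times \Delta, \oo_{X \times \Delta})$. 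Finally, as $\Delta$ is Stein with $\M{H}^{>0}(\Delta, \oo_\Delta) = 0$, the Künneth formula identifies $\M{H}^1(X \times \Delta, \oo_{X \times \Delta})$ with $\M{H}^1(X, \oo_X) \,\widehat{\otimes}\, \oo(\Delta)$, that is, with holomorphic maps $\Delta \to \M{H}^1(X, \oo_X)$. The map $\ti\gamma$ attached to $b$ reduces fibrewise to $\beta_{|X \times \{t\}} = \gamma(t) - \gamma(0)$, and after subtracting the lattice-valued constant $\ti\gamma(0)$ it is exactly the required lift; hence $\gamma$ is analytic.

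The main obstacle is the rigidity step in \emph{(i)}: one must understand the (possibly infinite dimensional and a priori non-Hausdorff) topological vector space $\M{H}^1(X, \oo_X)$ well enough to guarantee that continuous linear forms separate points, so that countability of the image genuinely forces constancy of $\ti\eta$. Everything else is a formal manipulation of the exponential and Deligne exact sequences together with the contractibility and Steinness of $\Delta$.
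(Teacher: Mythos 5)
Part (i) of your proposal is, up to the cosmetic reformulation with two curves (your $\eta=\gamma_1-\gamma_2$ versus the paper's single curve normalized so that $\gamma(0)=0$ and $\M{c}_1^{\M{D}}\circ\gamma=0$), exactly the paper's proof: identify the obstruction group as the image $\Lambda_{\Q}$ of $\M{H}^1(X,\Q_X)$ in $\M{H}^1(X,\oo_X)$, observe it is countable, and conclude by rigidity of holomorphic curves with countable image. Two caveats, neither of which makes you worse off than the paper, but worth recording. First, the rigidity principle: the paper asserts it in one line, while your proof via continuous linear forms requires such forms to separate points of $\M{H}^1(X,\oo_X)$ --- which fails precisely when that space is non-Hausdorff --- and you honestly flag this. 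Second, your stated reason for countability is not correct as written: second countability of $X$ does \emph{not} imply that $\M{H}^1(X,\Q_X)$ is countable (for an open surface of infinite genus it is $\prod_{n\geq 1}\Q$, of cardinality continuum); what is actually needed, and what the paper also uses without further comment, is countability of the image $\Lambda_{\Q}$ itself.

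Part (ii) is where you take a genuinely different route, and where there is a genuine gap. The paper's argument is elementary and purely \v{C}ech-theoretic: on a Stein cover $(U_i)$ of $X$ with contractible finite intersections, $\alpha$ is represented on $(U_i\times\Delta)$ by a pair $(u_{ij},c_{ijk})$ with $u_{ij}$ holomorphic and $c_{ijk}$ integer-valued; since $c_{ijk}$ is integer-valued and $\Delta$ is connected, $c_{ijk}(z,t)=c_{ijk}(z,0)$, so $t\mapsto\bigl(u_{ij}(\cdot,t)-u_{ij}(\cdot,0)\bigr)$ is a holomorphic family of $\oo_X$-valued $1$-cocycles lifting $t\mapsto\gamma(t)-\gamma(0)$, and that is the whole proof. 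Your route (normalize $\alpha$, kill the topological Chern class by homotopy invariance, lift through the exponential sequence to $b\in\M{H}^1(X\times\Delta,\oo_{X\times\Delta})$) is correct up to that point, but your final step invokes the K\"unneth identification of $\M{H}^1(X\times\Delta,\oo_{X\times\Delta})$ with $\M{H}^1(X,\oo_X)\,\widehat{\otimes}\,\oo(\Delta)$, read as the space of holomorphic maps $\Delta\to\M{H}^1(X,\oo_X)$. For an arbitrary complex manifold $X$ --- the generality of the lemma, and the one needed in Theorem \ref{main}, where this $X$ is the base $Y$ --- the space $\M{H}^1(X,\oo_X)$ need not be Hausdorff (coboundaries need not have closed range), so the completed tensor product and its identification with vector-valued holomorphic maps are not available off the shelf; K\"unneth theorems of this kind require separatedness or finiteness hypotheses. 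Worse, such an identification is essentially a stronger form of the very statement (ii) being proven, so the difficulty has been displaced into an unproved citation rather than resolved. The repair is cheap and lands you back on the paper's argument: represent $b$ by a \v{C}ech $1$-cocycle $b_{ij}(z,t)$ on the product Stein cover $(U_i\times\Delta)$ (Leray); then $t\mapsto[b_{ij}(\cdot,t)]$ is the required holomorphic lift, with no functional analysis at all. In short, your exponential-sequence normalization is a legitimate alternative to the paper's direct handling of the integer $2$-cochain, but the conclusion must be extracted at the level of cocycles, not of topologized cohomology groups.
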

\begin{proof}
(i) Assume that $\gamma$ is analytic, that $\gamma(0)=0$ and that for any $t$ in $\Delta$, the image of $\gamma(t)$ in
$\M{H}^2_{\M{D}}(X, \Q(1))$ vanishes. If $\tilde{\gamma}$ is a lift of $\gamma$ from $\Delta$ to $\T{H}^1 (X, \oo_X)$ such that $\tilde{\gamma}(0)=0$, then $\tilde{\gamma}(\Delta)$ lies in the image of $\T{H}^1(X, \Q_X)$ in $\T{H}^1(X, \mathcal{O}_X)$. Therefore $\tilde{\gamma}$ vanishes since it has countable image.
\par
(ii) Let $(U_i)_{i \in I}$ be a Stein cover of $X$ such that all finite intersections of the $U_i$'s are contractible. Then $\alpha$ can be written as a \v{C}hech cohomology class in $\T{H}^2_{\T{D}}(X\times \Delta, \Z(1))$ for the covering $(U_i \times \Delta)_{i \in I}$. This means that we can find a one-cochain $u_{ij}(z,t)$ with values in $\oo_{X \times \Delta}$ and a two-cochain $c_{ijk}(z,t)$ with values in $\Z_{X \times \Delta}$ such that $\alpha$ is represented by the couple $(u_{ij}, c_{ijk})$. Then we have $u_{ij}+u_{jk}+u_{ki}=c_{ijk}$ on $U_i \cap U_j \cap U_k$. Since $c_{ijk}(z,t)=c_{ijk}(z,0)$, we get that for any  $t$ in $\Delta$, $\alpha_{|X \times t}-\alpha_{|X \times 0}$ is represented by the couple $(u_{ij}(z,t)-u_{ij}(z,0), 0)$. The one cochain $(u_{ij}(z,t)-u_{ij}(z,0))$ is a one-cocycle with values in $\oo_X$ depending holomorphically on the variable $t$. This proves the result.
\end{proof}

\end{section}

\begin{section}{Proof of the theorems} \label{pf}

\textit{Proof of Theorem \ref{main}}

\par \medskip
Let us consider the line bundle $\lambda(\eee, f \times \T{id})$ on $Y \times \Delta$. By Proposition \ref{bc}, its restriction to a slice $Y \times \{ t \}$ is $\lambda(\eee_t, f).$ Using Lemma \ref{analytic},
we obtain that the curve $t \rightarrow \lambda(\eee_t, f)$ is analytic.
\par \medskip
Let us now consider the class $\alpha$ in $\T{H}^2_{\T{D}}(Y\times \Delta, \Q(1))$ defined by
\[
\alpha= \T{c}_1^{\T{D}}(\lambda(\eee, f \times \T{id}))-\bigl [(f \times \T{id})_* \bigl (\T{ch}^{\T{D}}(\eee) \, \T{td}^{\M{D}}(\T{T}_{X \times \Delta / Y \times \Delta}) \bigr) \bigr ]^{(2)}.
\]
By Lemma \ref{classic},
\[
\T{c}_1^{\T{H}}(\lambda(\eee, f \times \T{id}))=\sum_p (-1)^p\,  \T{c}_1^{\T{H}}\, [\T{R}^p (f \times \T{id})_* \,\eee]
\] in $\T{H}^1(Y \times \Delta, \Omega^1_{Y \times \Delta})$. Therefore, thanks to the Grothendieck-Riemann-Roch theorem in Hodge cohomology \cite{OTT}, $\alpha$ maps to zero in $\T{H}^1(Y \times \Delta, \Omega^1_{Y \times \Delta})$. By Lemma \ref{trick}, we obtain that for all $t$ in $\Delta$, $\alpha_{|Y \times \{t\}}=\alpha_{|Y \times \{0\}}$ in $\T{H}^2_{\T{D}}(Y, \Q(1))$.
\par \medskip
It remains to compute $\alpha_{|Y \times \{t\}}$. If we denote by $[\, . \,]_{\T{D}}$ the Deligne cohomology class of an analytic cycle, for any class $\beta$ in the rational Deligne cohomology ring of $X \times \Delta$, we have
\begin{align*}
\{ (f \times \T{id})_* \, \beta \}_{|Y \times \{t\}}&=\T{pr}_{1 *} \{(f \times \T{id})_* \, \beta \, . \, [Y \times \{t\}]_{\T{D}}\}\\
&=\T{pr}_{1 *} (f \times \T{id})_* \{ \beta \, .\,  (f \times \T{id})^* \,[Y \times \{t\}]_{\T{D}}\} \\
&=f_* \ \T{pr}_{1 *}  \{ \beta \, .\,  [X \times \{t\}]_{\T{D}}\} \\
&=f_* (\beta_{|X \times \{t\}}).
\end{align*}
Therefore we get
$\alpha_{|Y \times \{t\}}= \T{c}_1^{\T{D}}(\lambda(\eee_t, f))- [f_* \{\T{ch}^{\T{D}}(\eee_t) \, \T{td}^{\M{D}}(\T{T}_{X / Y }) \} ]^{(2)}.$
Remark now that for all $t$ in $\Delta$,
\[
\T{c}_1^{\T{D}}(\lambda(\eee_t, f))-\T{c}_1^{\T{D}}(\lambda(\eee_0, f))= [f_* \{ [\T{ch}^{\T{D}}(\eee_t)-\T{ch}^{\T{D}}(\eee_0)] \, \T{td}^{\M{D}}(\T{T}_{X / Y }) \} ]^{(2)}.
\]
\par \smallskip
Thus the curve $t \rightarrow  [f_* \bigl ([\T{ch}^{\T{D}}(\eee_t)-\T{ch}^{\T{D}}(\eee_0)] \, \T{td}^{\M{D}}(\T{T}_{X / Y }) \bigr) ]^{(2)}$ in $\T{H}^2_{\T{D}}(Y, \Q(1))$ can be lifted to the analytic curve $\alpha \colon t \rightarrow \lambda(\eee_t, f)-\lambda(\eee_0, f)$ in $\M{Pic}^0 (Y)$. This finishes the proof.
\par
\hspace{15.2cm} $\qed$
\par \medskip
\textit{Proof of Theorem \ref{second}}
\par \medskip
It is a direct consequence of Theorem $\ref{main}$. Indeed, since $\mathcal{L}$ lies in $\T{Pic}^0(Y \times F)$, there exists a holomorphic family of holomorphic line bundles joining $\oo_{Y \times F}$ to $\mathcal{L}$. Thanks to Theorem \ref{main}, we are reduced to the case $\mathcal{L}=\oo_{Y \times F}$. In this case, Theorem \ref{second} is straightforward since
\[
\ddet \T{R} p_* (\oo_{Y \times F}) \simeq \ddet [\oo_Y \lltens[\mathbb{C}_Y] \M{R \Gamma} (F, \oo_F)] \simeq \oo_{Y}
\]
and
$p_*[\M{td}^{\M{D}} \, (\M{T}_{Y \times F /Y })]=p_*[1 \boxtimes \M{td}^{\M{D}} \, (F)]$ is concentrated in degree zero.
\par
\hspace{15.2cm} $\qed$
\end{section}

\bibliographystyle{plain}
\bibliography{biblio}

\end{document}